\documentclass{amsart}
\usepackage[english]{babel}
\usepackage{amsthm}
\usepackage{inputenc}
\usepackage{amssymb}
\usepackage{graphicx}
\tolerance=5000 \topmargin -1cm \oddsidemargin=0,5cm
\evensidemargin=-0,2cm \textwidth 15.6cm \textheight 22cm
\linespread{1.2}
\vfuzz2pt 
\hfuzz2pt 
\newtheorem{thm}{Theorem}[section]

\newtheorem{prop}[thm]{Proposition}
\newtheorem{defn}[thm]{Definition}

\numberwithin{equation}{section}


\begin{document}

\title[The classification of algebras of level one]{The classification of algebras of level one}%

\author{A.Kh. Khudoyberdiyev, B.A. Omirov}

\address{[A.\ Kh.\ Khudoyberdiyev and B.\ A.\ Omirov] Institute of Mathematics,
National University of Uzbekistan,
Tashkent, 100125, Uzbekistan.} \email{khabror@mail.ru,
omirovb@mail.ru}

\begin{abstract}

In the present paper we obtain the list of algebras, up to
isomorphism, such that closure of any complex finite-dimensional
algebra contains one of the algebra of the given list.
\end{abstract}

\maketitle \textbf{Mathematics Subject Classification 2010}:
14D06; 14L30.

\textbf{Key Words and Phrases}: closure of orbit, degeneration,
level of algebra.

\section{Introduction}

It is known that any $n$-dimensional algebra over a field $F$ may
be considered as an element $\lambda$ of the affine variety
$Hom(V\otimes V,V)$ via the bilinear mapping $\lambda: V\otimes
V\to V$ on a vector space $V.$

Since the space $Hom(V\otimes V,V)$ form an $n^3$-dimensional
affine space $B(V)$ over $F$ we shall consider the Zariski
topology on this space and the linear reductive group $GL_n(F)$
acts on the space as follows:
$$(g*\lambda)(x,y)=g(\lambda(g^{-1}(x),g^{-1}(y))).$$

The orbits ($Orb(-)$) under this action are the  isomorphic
classes of algebras. Note that algebras which satisfy identities
(like, commutative, antisymmetric, nilpotency etc.) of the same
dimension form also an invariant subvariety of the variety of
algebras under the mentioned action.

In the study of a variety of algebras play the crucial role
closures of orbit of algebras. Since closure of open set forms
irreducible component of a variety, algebras whose orbits are open
(so-called rigid algebras) give the description of the variety of
algebras. One of the tools of finding rigid algebras are
degenerations. The description of a variety of algebras by means
of degenerations can be interpreted via down directed graph with
the highest vertexes rigid algebras. Since any $n$-dimensional
algebra degenerates to the abelian (denoted by $a_n$),
any edge ends with the algebra $a_n$. For some examples of
descriptions of varieties by means of degeneration graphs we refer
to papers \cite{B1,GH,S} and others.

In the paper of V.V. Gorbatsevich \cite{Gorb} the nearest-neighbor
algebras in degeneration graph  (algebras of level one) to the
algebra $a_n$ are investigated. Namely, such algebras in the
varieties of commutative (respectively, antisymmetric) algebras
are indicated.

For the case a ground field is algebraic closed from \cite{GH} it
is known that closures of orbits of algebras (denoted by
$\overline{Orb(-)}$) in Zariski and Euclidean topologies are
coincide. That is $\lambda\in\overline{Orb(\mu)}$ can be realized
by the following:
$$\exists g_t\in GL_n(\mathbb{C}(t))\mbox{ such that }\lim\limits_{t\to
0}g_t*\lambda=\mu,$$ where $\mathbb{C}(t)$ is the field of
fractions of the polynomial ring $\mathbb{C}[t].$

In this work we show that the paper \cite{Gorb} has some
incorrectness and we describe all algebras of level one in the
variety of all complex finite-dimensional algebras.

Let $\lambda$ and $\mu$ are complex algebras of the same
dimension.

\begin{defn} \label{defn23} An algebra $\lambda$ is said to
degenerate to algebra $\mu,$ if $Orb(\mu)$ lies in Zariski closure
of $Orb(\lambda).$ We denote this by $\lambda\to\mu.$
\end{defn}

The degeneration $\lambda \rightarrow \mu $ is called a {\it
direct degeneration} if there is no chain of non-trivial
degenerations of the form: $\lambda \rightarrow \nu \rightarrow
\mu.$

\begin{defn} Level of an algebra $\lambda$ is the maximum length of a chain of
direct degeneration. We denote the level of an algebra $\lambda$
 by $lev_n(\lambda)$.
\end{defn}

Consider the following algebras: $$ p_n ^ {\pm}: \ e_1e_i = e_i,
\quad e_ie_1 = \pm e_i, \quad i \geq 2, $$ $$ n_3^{\pm}: \ e_1e_2=
e_3, \quad e_2e_1 = \pm e_3. $$

\begin{thm}\cite{Gorb}\label{thm28} Let $\lambda$ be an $n$-dimensional algebra. Then

1. if the algebra $\lambda$ is skew-commutative, then $lev_n
(\lambda) = 1 $ if and only if it is isomorphic to $p_n^{-}$ or
(with $n  \geq 3$) to the algebra $n_3^{-} \oplus a_{n-3}.$ In
particular, the algebra $\lambda$ is a Lie algebra.

2. if the algebra $\lambda$ is commutative, then
$lev_n(\lambda)=1$ if and only if it is isomorphic to $p_n^{+}$ or
(for $n \geq 3$) to the algebra $n_3^{+} \oplus a_{n-3}.$ In
particular, the algebra $\lambda$ is an Jordan algebra.
\end{thm}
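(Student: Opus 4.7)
The plan is to prove the skew-commutative case in detail; the commutative case follows by an entirely parallel argument with the sign flips replaced throughout. There are two directions: verifying that the candidate algebras $p_n^{-}$ and $n_3^{-}\oplus a_{n-3}$ have level one, and classifying all level-one skew-commutative algebras. The first direction is a calculation of orbit closures; the second, a structural case analysis driven by invariants of degeneration.

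For the first direction, I would rely on the standard semicontinuity statements: if $\lambda\to\mu$, then $\dim Orb(\mu)\leq\dim Orb(\lambda)$, $\dim\mu^{2}\leq\dim\lambda^{2}$, $\dim Z(\mu)\geq\dim Z(\lambda)$, and similar inequalities hold for the annihilator and the terms of the derived series. For $p_n^{-}$, a direct computation gives $\dim (p_n^{-})^{2}=n-1$ and an orbit of dimension $n^{2}-n$; any strictly smaller skew-commutative algebra in $\overline{Orb(p_n^{-})}$ is forced by these inequalities and the rank of the multiplication tensor to have $\dim\mu^{2}=0$, hence $\mu\cong a_n$. For $n_3^{-}\oplus a_{n-3}$, the algebra is $2$-step nilpotent with $1$-dimensional derived subalgebra; the same invariants together with nilpotency force any proper degeneration to be abelian. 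Along the way one notes that $p_n^{-}$ is a solvable Lie algebra (with brackets $[e_1,e_i]=e_i$) and $n_3^{-}$ is the three-dimensional Heisenberg algebra, so the final sentence of each item of the theorem is automatic.

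For the converse, let $\lambda$ be skew-commutative of dimension $n$ and level one, so every non-trivial degeneration from $\lambda$ lies in $Orb(a_n)$. I would stratify by the pair $(\dim\lambda^{2},\dim Z(\lambda))$. If $\dim\lambda^{2}\geq 2$, I would construct an explicit one-parameter subgroup $g_t\in GL_n(\mathbb{C}(t))$ which kills a selected product while keeping another alive, producing an intermediate skew-commutative algebra $\nu\not\cong a_n$ and contradicting level one. This forces $\dim\lambda^{2}=1$. Writing $\lambda^{2}=\langle v\rangle$ and picking a complement, the multiplication $\lambda\otimes\lambda\to\langle v\rangle$ is encoded by a single skew form $B$ on an $(n-1)$-dimensional space, and the level-one hypothesis on remaining one-parameter contractions forces $B$ to be either of maximal rank concentrated on a single pair of vectors (the Heisenberg case, yielding $n_3^{-}\oplus a_{n-3}$) or to arise from an action of $v$ by a non-degenerate scalar on its complement (the $p_n^{-}$ case).

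The main obstacle is the converse classification: for each would-be $\lambda$ outside the two candidate isomorphism classes, one must exhibit a non-trivial one-parameter family $g_t*\lambda$ whose limit is a third algebra, neither $\lambda$ itself nor $a_n$. Ensuring that the constructed intermediate algebra is genuinely non-isomorphic to $\lambda$ and to $a_n$, and that the induced family respects skew-commutativity, is the technically delicate point; this is also precisely the point at which \cite{Gorb} is said in the introduction to contain incorrectness, which suggests that a naive enumeration overlooks certain deformation families and that part of the work of the present paper is to patch this step.
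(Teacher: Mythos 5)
This statement is quoted from \cite{Gorb} and the paper does not reprove it; on the contrary, the entire point of the paper is that part 2 of this theorem is \emph{false}, and the Proposition immediately following it supplies the counterexample. Your plan therefore cannot succeed as written, and the precise point of failure is the very first sentence: ``the commutative case follows by an entirely parallel argument with the sign flips replaced throughout.'' The symmetry between the two cases breaks because skew-commutativity forces $xx=0$ for every $x$, whereas commutativity does not. Concretely, the family
$$g_t(e_1) = t^{-1}e_1-\tfrac{t^{-2}}{2}e_2, \quad g_t(e_2) =\tfrac{t^{-2}}{2}e_2, \quad g_t(e_i) = t^{-2}e_i \ (3 \leq i \leq n)$$
gives $\lim_{t\to 0}g_t * p_n^{+} = \lambda_2\oplus a_{n-2}$, where $\lambda_2$ is the commutative algebra with sole product $e_1e_1=e_2$ (one checks directly that $g_t^{-1}(e_1)=t(e_1+e_2)$ and $(e_1+e_2)(e_1+e_2)=2e_2$ in $p_n^{+}$). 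Since $\lambda_2\oplus a_{n-2}$ is non-abelian, $p_n^{+}\to\lambda_2\oplus a_{n-2}\to a_n$ is a chain of length two and $lev_n(p_n^{+})\geq 2$; a similar family shows the same for $n_3^{+}\oplus a_{n-3}$. Note also that your semicontinuity invariants do not detect this: $\dim(\lambda_2\oplus a_{n-2})^2=1\leq\dim(p_n^{+})^2$, so the inequalities you list are all satisfied by this non-abelian degeneration, and the claim that they ``force $\dim\mu^2=0$'' is exactly where a proof of part 2 would break down. The correct commutative statement must include $\lambda_2\oplus a_{n-2}$ (and, in this paper's setting of all algebras, the family $\nu_n(\alpha)$) among the level-one algebras.

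Two further cautions on part 1, which \emph{is} believed correct. First, even for $p_n^{-}$ your argument that the orbit closure contains only $p_n^{-}$ and $a_n$ is underspecified: semicontinuity of $\dim\mu^2$ only gives $\dim\mu^2\leq n-1$, which does not by itself force any proper degeneration to be abelian. The paper's own Theorem 2.3 handles this by computing the limit structure constants directly: for any $g_t$ one finds $\lim_{t\to 0}(g_t*p_n^{-})(e_i,e_j)=\lim_{t\to 0}(\beta_{i,1}(t)e_j-\beta_{j,1}(t)e_i)$, and then either all $\beta_{i,1}\to 0$ (abelian limit) or a change of basis recovers $p_n^{-}$. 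Second, your converse classification in the skew-commutative case quietly assumes $\lambda^2$ is an invariant subspace on which a single skew form lives; when $\dim\lambda^2=1$ but $\lambda^2\not\subseteq Z(\lambda)$ you land in the $p_n^{-}$ branch via products $xy\in\langle x,y\rangle$, and this dichotomy ($xy\notin\langle x,y\rangle$ for some pair, versus $xy\in\langle x,y\rangle$ always) is the case split the paper actually uses in Proposition 2.2, rather than a stratification by $(\dim\lambda^2,\dim Z(\lambda))$.
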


\section{Main result}
In this section we describe all complex finite dimensional
algebras of level one.

Consider following algebras
 $$\lambda_2: \ e_1e_1 =
e_2,$$ $$ \nu_n(\alpha): \ e_1e_1= e_1, \quad e_1e_i = \alpha e_i,
\quad e_ie_1 = (1-\alpha) e_i, \ 2\leq i \leq n.
$$

In the following proposition we prove that algebras $p_n^{+}$
and $n_3^{+}\oplus a_{n-3}$ are not of level one.

\begin{prop}
$p_n^{+} \rightarrow \lambda_2 \oplus
a_{n-2}$ and $n_3^{+}\oplus a_{n-3} \rightarrow
\lambda_2 \oplus a_{n-2}.$
\end{prop}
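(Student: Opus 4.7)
For both degenerations I use the standard criterion recalled in the introduction: exhibit a family $g_t \in GL_n(\mathbb{C}(t))$, presented as a parametric change of basis $E_i^t$ of the source algebra, such that the structure constants in the new basis converge as $t \to 0$ to those of $\lambda_2 \oplus a_{n-2}$, i.e.\ $E_1 E_1 = E_2$ with all other products zero.

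For $p_n^+ \to \lambda_2 \oplus a_{n-2}$, I exploit that $e_1 e_i = e_i e_1 = e_i$ for $i \geq 2$ while $e_1 e_1$ and $e_i e_j$ ($i,j \geq 2$) are zero. Setting $E_1^t := t e_1 + e_2$, a direct expansion gives $(E_1^t)^2 = 2t\, e_2$, which is proportional to a single basis vector. Renaming $E_2^t := 2t\, e_2$ and keeping $E_i^t := e_i$ for $i \geq 3$, the identity $(E_1^t)^2 = E_2^t$ holds exactly for every $t$, while each remaining product acquires a positive power of $t$ (for instance, $E_1^t E_i^t = t\, E_i^t$ for $i \geq 3$) and therefore vanishes in the limit. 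The change-of-basis matrix has determinant proportional to $t^2$, so $g_t \in GL_n(\mathbb{C}(t))$.

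For $n_3^+ \oplus a_{n-3} \to \lambda_2 \oplus a_{n-2}$, the key identities in $n_3^+$ are $(e_1+e_2)(e_1+e_2) = 2 e_3$ and $(e_1+e_2)(e_1-e_2) = (e_1-e_2)(e_1+e_2) = 0$. I take $E_1^t := e_1 + e_2$, $E_2^t := 2 e_3$, $E_3^t := t(e_1 - e_2)$, and $E_i^t := e_i$ for $i \geq 4$. Then $(E_1^t)^2 = E_2^t$ for all $t$, and the only other surviving nonzero product is $(E_3^t)^2 = -t^2 E_2^t$, which tends to $0$; every product involving $E_2^t$ vanishes since $e_3$ annihilates everything in $n_3^+$, and every product involving $E_i^t$ ($i \geq 4$) vanishes since those basis vectors lie in the abelian summand. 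The transformation restricted to the first three coordinates has determinant $4t$, so $g_t$ is invertible for $t \neq 0$.

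The verifications are mechanical; the only creative step in either case is calibrating the mixing so that the target product $E_1 E_1 = E_2$ appears with coefficient exactly $1$ and without any overall power of $t$ that would kill $E_2^t$ in the limit and produce the abelian algebra $a_n$ instead of $\lambda_2 \oplus a_{n-2}$.
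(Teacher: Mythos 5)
Your proof is correct and takes essentially the same approach as the paper: both degenerations are realized by a parametric change of basis mixing $e_1$ with $e_2$ so that the square of the new first vector is a single basis direction, while every other product picks up a positive power of $t$. The paper's families $g_t$ (resp.\ $f_t$), once inverted, give bases proportional to yours, so the two arguments coincide up to rescaling.
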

\begin{proof}

The first degeneration is given by the family of transformations $g_t:$

$$g_t(e_1) = t^{-1}e_1-\frac{t^{-2}}{2}e_2, \quad g_t(e_2) =\frac{t^{-2}}{2}e_2, \quad g_t(e_i) = t^{-2}e_i, \quad 3 \leq i \leq
n.$$ The second one is realized by the family $f_t:$
$$f_t(e_1) = t^{-1}e_1-t^{-2}e_3, \quad f_t(e_2) = t^{-2}e_3,\quad
f_t(e_3) = \frac{t^{-2}}{2}e_2, \quad f_t(e_i) = e_i, \quad 4 \leq i
\leq n.$$

\end{proof}

The above Proposition shows that the second assertion of result of Theorem \ref{thm28}
is not correct.

In order to prove the main theorem we need the following interim result.

\begin{prop}\label{th31} Any $n$-dimensional $(n \geq  3)$ non-abelian algebra
degenerates to one of the following algebras $$p_n ^ {-},
\quad  n_3^- \oplus a_{n-3}, \quad \lambda_2 \oplus a_{n-2}, \quad
\nu_n(\alpha), \ \alpha\in \mathbb{C}.$$
\end{prop}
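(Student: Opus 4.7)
The plan is a case split on the nature of the quadratic map $a\mapsto a^2$ on $V$. Over $\mathbb{C}$ exactly one of the following holds: (i) $a^2=0$ for every $a$, i.e.\ $\lambda$ is skew-commutative; (ii) there exists $a$ with $a^2$ not a scalar multiple of $a$; or (iii) linearization yields $a^2=\phi(a)a$ for some nonzero linear functional $\phi$, so that any $e$ with $\phi(e)=1$ is an idempotent.

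In case (i), Theorem~\ref{thm28}(1), whose statement is not in dispute, implies that any non-abelian skew-commutative algebra degenerates to $p_n^-$ or $n_3^-\oplus a_{n-3}$: indeed, skew-commutativity is preserved under degeneration, so the step preceding $a_n$ in any chain from $\lambda$ is a level-one skew-commutative algebra, and by Theorem~\ref{thm28}(1) this is one of the two listed. In case (ii), pick such an $a$, set $e_1:=a$, $e_2:=a^2$, extend to a basis, and apply $g_t(e_1)=t^{-1}e_1$, $g_t(e_j)=t^{-2}e_j$ for $j\ge 2$: a direct $t$-power count shows $g_t\ast\lambda(e_1,e_1)\to e_2$ while every other $g_t\ast\lambda(e_i,e_j)\to 0$, so the limit is $\lambda_2\oplus a_{n-2}$.

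Case (iii) is the heart of the argument. Write $V=\mathbb{C}e\oplus W$ with $W=\ker\phi$. Substituting $a=e+w$ ($w\in W$) into $a^2=\phi(a)a$ forces $L_e|_W+R_e|_W=I_W$, and substituting $a=w+w'$ shows the product on $W$ is skew-commutative. A preliminary degeneration $h_t(e)=e$, $h_t(w)=t^{-1}w$ for $w\in W$ kills $W^2$ as well as the components of $ew$ and $we$ along $e$, producing $\tilde\lambda$ with $e^2=e$, $W^2=0$, and $L:=L_e|_W$, $R:=R_e|_W$ satisfying $L+R=I_W$. An isomorphism brings $L$ into upper-triangular form, and then the scaling $h_s(e_i)=s^{-(i-2)}e_i$ on $W$ shrinks the strictly upper-triangular part to zero, leaving $L=\mathrm{diag}(\alpha_2,\ldots,\alpha_n)$ and $R=I_W-L$. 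The final task is to degenerate these diagonals into the scalars $\alpha I_W$ and $(1-\alpha)I_W$, yielding $\nu_n(\alpha)$.

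The main obstacle is precisely this last collapse. Conjugation on $W$ preserves the spectrum of $L$, so equalizing the $\alpha_i$ requires a genuine degeneration that exploits the $\mathbb{C}e\oplus W$ splitting rather than just acts by basis changes on $W$. I expect an $\alpha$-parametrized family translating $e$ by a chosen $w\in W$ while simultaneously rescaling the $e_i$, tuned so that the $e$-components generated in transition cancel, the relation $L+R=I_W$ is preserved through the limit, and the various $\alpha_i$ are driven to a common value $\alpha$. Choosing $\alpha$ correctly, writing down the explicit family, and verifying that no $t$-power blows up are the delicate points of the proof.
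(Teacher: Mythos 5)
Your cases (i) and (ii) are fine, and your reduction in case (iii) to the algebra $e^2=e$, $ee_i=\alpha_i e_i$, $e_ie=(1-\alpha_i)e_i$, $W^2=0$ is a correct sequence of degenerations. But the step you leave open --- collapsing $L=\mathrm{diag}(\alpha_2,\dots,\alpha_n)$ to a scalar so as to land on $\nu_n(\alpha)$ --- is not merely delicate: it is impossible in general, so no choice of family will finish the proof along this route. Indeed, if $g_t*\mu\to\nu$ then $L^{g_t*\mu}_y=g_tL^{\mu}_{g_t^{-1}(y)}g_t^{-1}$, so for every $y$ the characteristic polynomial of left multiplication $L^{\nu}_y$ is a limit of characteristic polynomials of $L^{\mu}_{x_t}$ with $x_t=g_t^{-1}(y)$. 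In your reduced algebra the spectrum of $L^{\mu}_{ae_1+w}$ is $a\cdot\{1,\alpha_2,\dots,\alpha_n\}$, while in $\nu_n(\alpha)$ the spectrum of $L_{e_1}$ is $\{1,\alpha,\dots,\alpha\}$; comparing coefficients of the limiting polynomials shows this forces $\{1,\alpha_2,\dots,\alpha_n\}$ to be proportional to a multiset of the form $\{1,\alpha,\dots,\alpha\}$. For $n=3$, $\alpha_2=2$, $\alpha_3=3$ no such $\alpha$ exists, so that algebra does not degenerate to any $\nu_3(\alpha)$.

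The missing idea is a finer case split, which is exactly how the paper avoids the problem: it separates according to whether $xy\in\langle x,y\rangle$ for \emph{all pairs} $x,y$, not merely $x^2\in\langle x\rangle$ for all $x$. If two of your diagonal entries differ, say $\alpha_i\neq\alpha_j$, then $e(e_i+e_j)=\alpha_ie_i+\alpha_je_j\notin\langle e,\,e_i+e_j\rangle$, and setting $f_1=e$, $f_2=e_i+e_j$, $f_3=f_1f_2$ (note $f_1f_2+f_2f_1=f_2$, so the skew part survives) the weight-$(1,1,2,\dots,2)$ scaling from your case (ii) degenerates the algebra to $n_3^-\oplus a_{n-3}$ --- a different member of the list, which is all the proposition requires. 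In the residual case, the condition $e_1(e_i+e_j)\in\langle e_1,e_i+e_j\rangle$ forces all $\alpha_i$ to be equal from the outset, and no collapse is ever needed. A secondary point: in case (i) you rely on Theorem \ref{thm28}(1) together with an unstated descending-chain argument (termination follows from the strict drop of orbit dimension); since the paper's purpose is to correct an error in that very theorem, it instead proves the skew-commutative case directly by the same dichotomy (some $xy\notin\langle x,y\rangle$ gives $n_3^-\oplus a_{n-3}$, otherwise one normalizes to $p_n^-$), and your argument would be safer doing likewise.
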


\begin{proof} Let $A$ be an $n$-dimensional non-abelian algebra.

Firstly we consider the case when $A$ is antisymmetric algebra.
Clearly, $xx=0$ for any $x$ of $A.$

If there exist elements $x, y\in A$ such that $xy \notin <x, y>$,
then we can consider a basis of $A:$ $$e_1=x, \ e_2=y, \ e_3 = xy,
\ e_4, \ \dots, \ e_n.$$

It is easy to check that algebra $A$ degenerates to the algebra
$n_3^{-}\oplus a_{n-3}$ by the use of the family $g_t:$
$$g_t(e_1)=t^{-1}e_1,\quad g_t(e_2)=t^{-1}e_2,\quad g_t(e_i)=t^{-2}e_i,\ 3 \leq i \leq n.$$

Consider now the contrary case, i.e. $xy \in <x, y>$ for all $x,y
\in L$. Then the table of multiplication of the algebra $A$ have
the form $$e_ie_j = \gamma_{i,j}^i e_i+ \gamma_{i,j}^j e_j, \
1\leq i, j\leq n.$$ Since the algebra $A$ is non-abelian, without
lost of generality, we can assume $\gamma_{1,2}^2 \neq 0.$

Taking the change $e_1'= \frac 1 {\gamma_{1,2}^2}e_1,$ $e_2'= e_2
+ \frac {\gamma_{1,2}^1} {\gamma_{1,2}^2}e_1,$ we can suppose
$e_1e_2 = e_2.$

Consider the product $$e_1(e_2+ e_i) = \gamma_{1,i}^1 e_1 +
(e_2+e_i) + (\gamma_{1,i}^i -1)e_i.$$ Taking into account
$e_1(e_2+ e_i) \in <e_1, e_2+e_i>$ we deduce $\gamma_{1,i}^i = 1,
\ 3 \leq i \leq n.$ Setting $e_i'= e_i + \gamma_{1,i}^1e_1, \
3\leq i \leq n$, we obtain $e_1e_i = e_i, \ 2 \leq i \leq n.$

Putting $g_t$ as follows: $$g_t(e_1)= e_1, \ g_t(e_i)=t^{-1}e_i,  \ 2 \leq i \leq n,$$ we get
$\lim_{t\rightarrow 0}{g_{t}}*A=p_n^{-}.$

Now we assume that the algebra $A$ is not antisymmetric. Then
there exists an element $x$ of $A$ such that $xx\neq 0.$

{\bf Case 1.} Let there exists $x$ of the algebra $A$ such that
$xx \notin <x>.$ Then we can chose a basis $e_1=x, \ e_2=xx,
\dots, e_n.$ The degeneration $A\rightarrow \lambda_2\oplus
a_{n-2}$ is realized by the family $g_t:$
$$g_t(e_1)= t^{-1}e_1, \quad g_t(e_i)=t^{-2}e_i,  \quad 2 \leq i \leq n.$$

{\bf Case 2} Let $xx \in <x>$ for all $x \in A$. Then for any $x,
y\in A$ we have $(x+y)(x+y)= xx +xy + yx+ yy \in <x+y>.$
Therefore, $xy + yx \in <x, y>.$

If there exist elements $x$ and $y$ such that $xy \notin <x,y>,$
then we chose a basis $\{e_1=x, e_2=y, e_3=e_1e_2, \dots, e_n\}$
of the algebra $A.$ The following family $$g_t: g_t(e_1)=
t^{-1}e_1, \ g_t(e_2)= t^{-1}e_2, \ g_t(e_i)=t^{-2}e_i, \ 3 \leq i
\leq n$$ derives the degeneration $A \rightarrow n_3^-\oplus
a_{n-3}.$

Now we consider the case when $xy \in <x, y>$ for all $x,y \in A.$
Then for a basis $\{e_1, e_2, e_3, \dots , e_n\}$ of $A$ we have
$e_ie_i = \alpha_ie_i, \ 1 \leq i \leq n.$ Taking into account
that algebra $A$ is non-antisymmetric, we can suppose
$\alpha_1\neq 0.$ Without loss of generality we can assume
$\alpha_i\neq 0, \ 1 \leq i \leq k$ and $\alpha_i= 0, \ k+1 \leq i
\leq n.$ By scaling of basis elements we get $e_ie_i = e_i,\ 1
\leq i \leq k, \quad \alpha_i= 0, \ k+1 \leq i \leq n.$

The includings of the following products
$$(e_1 \pm e_i)(e_1 \pm e_i)
= e_1 \pm e_1e_i \pm e_ie_1 + e_i  \in <e_1 \pm e_i>,$$ imply
$e_1e_i + e_ie_1 = e_1+e_i, \ 1 \leq i \leq k.$

Similarly, we obtain $$e_1e_i + e_ie_1 = e_i, \ k+1 \leq i \leq
n.$$

Making the the change of basis $$e_i'=e_i-e_1, \ 2 \leq i \leq k
\quad e_i'=e_i, \ k+1\leq i \leq n,$$ we get the following
products
$$e_1e_1 = e_1,
\quad  e_ie_i = 0, \ 2  \leq i \leq  n, \quad e_1e_i = \alpha_ie_i
+ \beta_ie_1, \quad e_ie_1 = (1-\alpha_i)e_i - \beta_ie_1, \ 2
\leq i \leq  n,$$ for some $\alpha_i, \beta_i\in \mathbb{C}.$

The product $$e_1(e_i+ e_j) =(\alpha_j-\alpha_i)e_j+ \alpha_i
(e_i+e_j)+(\beta_i+\beta_j)e_1$$ and $e_1(e_i+ e_j) \in <e_1,
e_i+e_j>$ imply $\alpha_i = \alpha, \ 2 \leq i \leq n.$

The degeneration $A \rightarrow \nu_n(\alpha)$ which is realized
by using the family $$g_t: \ g_t(e_1)= e_1, \ g_t(e_i)=t^{-1}e_i,
\ 2 \leq i \leq n,$$ complete the proof of proposition.
\end{proof}

\begin{thm}\label{th32} Let $A$ be an $n$-dimensional $(n \geq 3)$ algebra of level one,
then it is isomorphic to one of the following algebras: $$p_n ^
{-}, \quad  n_3^- \oplus a_{n-3}, \quad \lambda_2 \oplus a_{n-2},
\quad \nu_n(\alpha), \ \alpha\in\mathbb{C}.$$
\end{thm}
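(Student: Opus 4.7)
The strategy is to combine Proposition \ref{th31} with the definition of level one, reducing the task to showing that the degeneration produced by that proposition must in fact be an isomorphism. First observe that $A$ cannot be abelian: the orbit of $a_n$ is a single point (the zero product), so $a_n$ admits no non-trivial degenerations and hence $lev_n(a_n) = 0$. Thus Proposition \ref{th31} applies, yielding a degeneration $A \to B$ with $B$ belonging to the list
\[
\{p_n^{-},\ n_3^{-} \oplus a_{n-3},\ \lambda_2 \oplus a_{n-2},\ \nu_n(\alpha)\}.
\]
The goal is to show $A \cong B$.

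Suppose, for contradiction, that $A \not\cong B$, so that $Orb(B)$ is contained in the boundary $\overline{Orb(A)} \setminus Orb(A)$. Since the $GL_n(\mathbb{C})$-orbits are locally closed (a standard consequence of the action being that of a reductive group), this boundary has strictly smaller dimension than $\overline{Orb(A)}$; hence $\dim Orb(B) < \dim Orb(A)$. Using this dimension drop one can refine $A \to B$ into a chain of direct degenerations $A = A_0 \to A_1 \to \cdots \to A_k = B$ with $k \geq 1$: whenever a step fails to be direct, insert an intermediate algebra, whose orbit necessarily has smaller dimension, and iterate; well-foundedness of the dimension ordering forces termination.

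Now every algebra in the list is non-abelian, so $B \not\cong a_n$. Applying the same refinement to the non-trivial degeneration $B \to a_n$ produces a further chain of direct degenerations $B = A_k \to A_{k+1} \to \cdots \to A_m = a_n$ with $m - k \geq 1$. Concatenating gives a chain of direct degenerations from $A$ to $a_n$ of total length $m \geq 2$, contradicting $lev_n(A) = 1$. Therefore $A \cong B$, which is precisely the conclusion of the theorem.

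The only substantive ingredient beyond Proposition \ref{th31} is the refinement of an arbitrary degeneration into a chain of direct ones, which in turn rests on the strict inequality $\dim Orb(A) > \dim Orb(B)$ for non-trivial $A \to B$. This is standard, so I do not expect a real obstacle: Proposition \ref{th31} is doing all of the algebraic work, and the theorem itself is essentially a formal consequence of that proposition and the definition of level.
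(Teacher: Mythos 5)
Your proof is correct for the theorem as literally stated, but it takes a genuinely different route from the paper's. You treat the theorem as a purely formal consequence of Proposition \ref{th31}: a level-one algebra is non-abelian, hence degenerates to some $B$ in the list, and the refinement of any non-trivial degeneration into a chain of direct ones (via the strict drop in orbit dimension, orbits being locally closed with irreducible closure) forces $A \cong B$, since otherwise $A \rightarrow B \rightarrow a_n$ would refine to a direct chain of length at least two. This is a complete argument for the implication ``level one $\Rightarrow$ in the list,'' and it actually makes explicit a logical step the paper glosses over. The paper instead reduces the theorem to the claim that the four listed algebras do not degenerate to one another and spends the whole proof on that: separating the antisymmetric algebras $p_n^-$ and $n_3^-\oplus a_{n-3}$ from the commutative $\lambda_2\oplus a_{n-2}$, using nilpotency to exclude degenerations onto $p_n^-$ and $\nu_n(\alpha)$, and computing $\overline{Orb(p_n^-)}=\{p_n^-,a_n\}$ and $\overline{Orb(\nu_n(\alpha))}=\{\nu_n(\alpha),a_n\}$ by hand. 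That material is really what is needed for the converse --- that each listed algebra genuinely has level one, so the classification is exact and the list non-redundant --- which your argument does not address but which the stated implication does not require. One small quibble: local closedness of orbits holds for arbitrary algebraic group actions (Chevalley's theorem on constructible images), not because $GL_n(\mathbb{C})$ is reductive; the fact you use is true, but the reason you cite for it is a red herring.
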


\begin{proof} Due to Proposition \ref{th31} it is sufficient to prove that these
four algebras do not degenerate to each other.

Since $p_n ^ {-}$ and $n_3^- \oplus a_{n-3}$ are antisymmetric
algebras, but $\lambda_2 \oplus a_{n-2}$ is commutative, we obtain
$\overline{Orb(p_n ^ {-})}\cap \overline{Orb(\lambda_2 \oplus
a_{n-2}})=\{a_n\}$ and $\overline{Orb(n_3^- \oplus
a_{n-3})}\cap\overline{Orb(\lambda_2 \oplus a_{n-2})}=\{a_n\}.$
Moreover, algebras $n_3^- \oplus a_{n-3}$ and $\lambda_2 \oplus
a_{n-2}$ are nilpotent, but $p_n^-$ and $\nu(\alpha)$ are not
nilpotent. Therefore, $n_3^- \oplus a_{n-3}$ and $\lambda_2 \oplus
a_{n-2}$ do not degenerate to algebras $p_n^-$ and $\nu(\alpha).$

Let us show that $\overline{Orb(p_n^{-})}=\{p_n^{-}, a_n\}.$
Consider a family of basis transformation $g_t$ of the algebra
$p_n^{-}.$ Then we have
$$e_ie_j = \lim_{t\rightarrow 0}g_t(g^{-1}_t(e_i)g^{-1}_t(e_j)) =
\lim_{t\rightarrow
0}g_t(\sum_{k=1}^{n}\beta_{i,k}(t)e_k\sum_{k=1}^{n}\beta_{j,k}(t)e_k)=$$
$$\lim_{t\rightarrow 0}g_t(\beta_{i,1}(t)\sum_{k=2}^{n}\beta_{j,k}(t)e_k -
\beta_{j,1}(t)\sum_{k=2}^{n}\beta_{i,k}(t)e_k)=\lim_{t\rightarrow
0}g_t(\beta_{i,1}(t)\sum_{k=1}^{n}\beta_{j,k}(t)e_k -$$
$$
\beta_{j,1}(t)\sum_{k=1}^{n}\beta_{i,k}(t)e_k)= \lim_{t\rightarrow
0}g_t(\beta_{i,1}(t)g_t^{-1}(e_j) - \beta_{j,1}(t)g_t^{-1}(e_i)) =
\lim_{t\rightarrow 0}(\beta_{i,1}(t)e_j - \beta_{j,1}(t)e_i).$$

If $\lim_{t\rightarrow 0}\beta_{i,1}(t)=0$ for any $i,$ then we
get the algebra $a_n$.

If there exist $i_0 \ (1 \leq i_0 \leq n)$ such that
$\lim_{t\rightarrow 0}\beta_{i_0,1}(t)=\beta_{i_0} \neq 0,$ then
without lost of generality we can suppose $i_0=1.$

Taking the change of basis $e_1' = \frac 1 {\beta_1}e_1,$ $e_i' =
e_i - \frac {\beta_i} {\beta_1}e_1$ in the algebra
$\lim_{t\rightarrow 0}g_t*p_n^{-}$ we have
$$e'_1e'_i = -e'_ie'_1=e'_i.$$
Thus, we obtain $\lim_{t\rightarrow 0}g_t*p_n^{-} =p_n^{-}.$

In a similar way we show that
$\overline{Orb(\nu(\alpha)}=\{\nu(\alpha), a_n\}.$

Consider
$$e_ie_i = \lim_{t\rightarrow 0}g_t(g^{-1}_t(e_i)g^{-1}_t(e_i)) =
\lim_{t\rightarrow
0}g_t(\sum_{k=1}^{n}\beta_{i,k}(t)e_k\sum_{k=1}^{n}\beta_{i,k}(t)e_k)=$$
$$\lim_{t\rightarrow 0}g_t(\beta_{i,1}(t)^2e_1+\beta_{i,1}(t)\sum_{k=2}^{n}\beta_{i,k}(t)e_k)=
\lim_{t\rightarrow 0}g_t(\beta_{i,1}(t)\sum_{k=1}^{n}\beta_{i,k}(t)e_k)=\lim_{t\rightarrow 0}\beta_{i,1}(t)e_i,$$

$$e_ie_j = \lim_{t\rightarrow 0}g_t(g^{-1}_t(e_i), g^{-1}_t(e_j)) =
\lim_{t\rightarrow
0}g_t(\sum_{k=1}^{n}\beta_{i,k}(t)e_k\sum_{k=1}^{n}\beta_{j,k}(t)e_k)=$$
$$\lim_{t\rightarrow 0}g_t(\beta_{i,1}(t)\beta_{j,1}(t)e_1+\alpha \beta_{i,1}(t)\sum_{k=2}^{n}\beta_{j,k}(t)e_k +(
1-\alpha) \beta_{j,1}(t)\sum_{k=2}^{n}\beta_{i,k}(t)e_k)=$$
$$\lim_{t\rightarrow 0}g_t(\alpha \beta_{i,1}(t)\sum_{k=1}^{n}\beta_{j,k}(t)e_k +
(1-\alpha) \beta_{j,1}(t)\sum_{k=1}^{n}\beta_{i,k}(t)e_k)=
\lim_{t\rightarrow 0}(\alpha \beta_{i,1}(t)e_j +
(1-\alpha) \beta_{j,1}(t)e_i).$$

If $\lim_{t\rightarrow 0}\beta_{i,1}(t)=0$ for all $i, \ (1 \leq i
\leq n)$ then we have the algebra $a_n$.

If there exist $i_0 \ (1 \leq i_0 \leq n)$ such that
$\lim_{t\rightarrow 0}\beta_{i_0,1}(t)=\beta_{i_0} \neq 0,$ then,
without lost of generality, we can assume that $\beta_{i} \neq 0$
for $1 \leq i \leq k$ and $\beta_i = 0$ for $k+1 \leq i \leq n.$

Taking the change
$$e_1' = \frac 1 {\beta_1}e_1,
\quad e_i' = \frac 1 {\beta_i}e_i - \frac 1 {\beta_1}e_1, \ 1 \leq
i \leq k, \quad e_i' = e_i, \ k+1 \leq i \leq n$$ in the algebra
$\lim_{t\rightarrow 0}g_t*\nu_n(\alpha),$ we derive the table of
multiplication:
$$e'_1e'_1 = e'_1, \quad e'_1e'_i =\alpha
e_i', \ 2 \leq i \leq n, \quad e'_ie'_1 = (1-\alpha)e_i, \ 2 \leq
i \leq n.$$
\end{proof}

Remark that two-dimensional algebras of level one are the
following
$$p_2 ^ {-}, \quad  \lambda_2, \quad \nu_2(\alpha).$$


\end{document}